\newtheorem{theorem}{Theorem}[section]
\newtheorem{corollary}{Corollary}[section]
\begin{document}

\begin{center}
{\LARGE \bf  Proving the existence of Euclidean knight's tours \\[4mm] on $n \times n \times \cdots \times n$ chessboards for $n < 4$}
\vspace{12mm}

{\Large \bf Marco Rip\`a}
\vspace{3mm}

World Intelligence Network \\ 
Rome, Italy \\
e-mail: \url{marcokrt1984@yahoo.it}
\vspace{8mm}

\end{center}

\noindent {\bf Abstract:} The Knight's Tour problem consists of finding a Hamiltonian path for the knight on a given set of points so that the knight can visit exactly once every vertex of the mentioned set.\linebreak In the present paper, we provide a $5$-dimensional alternative to the well-known statement that it is not ever possible for a knight to visit once every vertex of $C(3,k)\coloneqq \{0,1,2\}^k$ by performing a sequence of $3^k-1$ jumps of standard length, since the most accurate answer to the original question actually depends on which mathematical assumptions we are making at the beginning of the game, when we decide to extend a planar chess piece to the third dimension and above.
Our counterintuitive outcome follows from the observation that we can alternatively define a $2$D knight as a piece that moves from one square to another on the chessboard by covering a fixed Euclidean distance of $\sqrt{5}$ so that also the statement of Theorem~3 in [Erde, J., Gol{\'e}nia, B., \& Gol{\'e}nia, S. (2012), The closed knight tour problem in higher dimensions, The Electronic Journal of Combinatorics, 19(4), \#P9] does not hold anymore for such a Euclidean knight, as long as a $2 \times 2 \times \cdots \times 2$ chessboard with at least $2^6$ cells is given. Moreover, we show a classical closed knight's tour on $C(3,4)-\{(1,1,1,1)\}$ whose arrival is at a distance of $2$ from $(1,1,1,1)$, and we finally construct closed Euclidean knight's tours on $\{0,1\}^k$ for each integer $k \geq 6$.
\\
{\bf Keywords:} Knight's tour, Euclidean distance, Knight metric, Hamiltonian path. \\ 
{\bf 2020 Mathematics Subject Classification:} 05C38, 05C57 (Primary) 05C12 (Secondary).
\vspace{5mm}


\section{Introduction} \label{sec:Intr}

Given a set $C \subseteq \mathbb{R}^k$ consisting of $m \in \mathbb{Z}^+$ points, it is commonly agreed that every knight's tour is a sequence of $m-1$ knight jumps, of Euclidean length $\sqrt{5}$ chessboard units each (where one chessboard unit is the distance between the centers of adjacent squares of the chessboard), that let the knight visit exactly once all the $m$ vertices of $C$. In particular, we say that the knight's tour is closed if and only if the $m$-th visited vertex of $C$ (including the starting vertex) is at a unit knight-distance from the beginning point, otherwise we have an open knight's tour on $C$.

The origins of the Knight's Tour problem are lost in the centuries, this being a thousands of years old puzzle \cite{18} that lists among its contributors some very big names in mathematics, such as Abraham de Moivre, Alexandre-Th\'eophile Vandermonde, Adrien-Marie Legendre, and Leonard Euler himself, who found one solution for the planar $8 \times 8$ configuration in 1759 \cite{4,8}. Euler's solution is an open knight's tour since the center of the last square visited by the knight is not at a distance of $\sqrt{2^2+1^2}$ chessboard units from the center of its starting square (in the most common sense, for arbitrary $k$, being the beginning vertex at a Euclidean distance of $\sqrt{5}$ from the arrival would represent a necessary but not sufficient condition for having a closed knight's tour).

\sloppy Now, if we agree that the Euclidean $\sqrt{5}$-rule (see \cite{1}, Article~3.6, that uses the superlative of \textit{near} as a criterion for the official knight move rule) defines also the knight metric for any $k$-dimensional $n \times n \times \cdots \times n$ chessboard (while a customizable definition of the discrete knight pattern is at the bottom of the generalized knight's tour problem in two and three dimensions, as described in \cite{12} and \cite{7}, respectively), we trivially have that a $k$-knight is a mathematical object whose move rule consists of performing only jumps having Euclidean length equal to $\sqrt{5}$ chessboard units \cite{6}, from a cell of the given chessboard to one of the remaining $n^k-1$ cells. Thus, we can move our favorite chess piece from the vertex ${\rm{V_1}}$, identified by the $k$-tuple of Cartesian coordinates $(x_1, x_2, \ldots, x_k) : x_1, x_2, \ldots, x_k \in \mathbb\{0,1,\ldots, n-1\}$, to another one, ${\rm{V_2}}\equiv (y_1, y_2, \ldots, y_k)$ also belonging to $C(n,k) \coloneqq \{\underbrace{\{0,1,\ldots, n-1\} \times \{0,1,\ldots, n-1\}\times \cdots \times \{0,1,\ldots, n-1\}}_\textrm{\textit{k}-times}\}$, if and only if
\begin{equation}\label{eq:1}
\sqrt{(y_1-x_1)^2+(y_2-x_2)^2+ \cdots + (y_k-x_k)^2}=\sqrt{5} .
\end{equation}

Hence, (1) can be compactly rewritten as 
\begin{equation}\label{eq:2}
{\sum_{j=1}^{k}(x_j-y_j)^2}=5 .
\end{equation}

It is easily possible to show that, although our \textit{Euclidean knight} produces a metric for every pair $(n,k)$ that allows the usual knight to do so (i.e., $n \geq 4 \wedge k \geq 2$ represents a sufficient condition \cite{21}), it also induces a metric space on any given $3 \times 3 \times \cdots \times 3$ chessboard with at least $3^5$ cells (Section~\ref{sec:2}) and even on every $2 \times 2 \times \cdots \times 2$ chessboard consisting of at least $2^6$ cells (see Section~\ref{sec:4}, Theorem~\ref{Theorem 4.1}).

\sloppy Furthermore, we construct an open Euclidean knight's tour on $\{\{0,1,2\} \times \{0,1,2\}\times \{0,1,2\}\times \{0,1,2\} \times \{0,1,2\}\}\subseteq \mathbb{Z}^5$ and provide a closed Euclidean knight's tour on $\{\{0,1\} \times \{0,1\} \times \{0,1\} \times \{0,1\} \times \{0,1\} \times \{0,1\}\}\subseteq \mathbb{Z}^6$ (which can be smoothly generalized, for any given $k \geq 6$, to $\{\{0,1\} \times \{0,1\} \times \cdots \times \{0,1\}\}\subseteq \mathbb{Z}^k$).

Accordingly, Section~\ref{sec:2} is devoted to proving the existence of Euclidean knight's tours when $n=3$ is given, and then a pair of corollaries will follow, while Section~\ref{sec:3} proposes a variation of the main problem \cite{5} by removing the central vertex from any grid $\{\{0,1,2\} \times \{0,1,2\}\times \cdots \times \{0,1,2\}\}\subseteq \mathbb{Z}^k$ \cite{11}, under the additional constraint of ending the path in a vertex that is at a Euclidean distance of $\sqrt{k}$ from the missing point $(1,1,\ldots,1)$ (for the related problem of determining the existence of closed, conventional, knight's tours on boxes, see Theorem~1 of \cite{9}).

Finally, Section~\ref{sec:4} entirely covers the $n=2$ case.


\section{Euclidean knight's tours on a \texorpdfstring{$3 \times 3 \times 3 \times 3 \times 3$}{3 x 3 x 3 x 3 x 3} chessboard} \label{sec:2}

Here, we consider the problem of finding (possibly open) Euclidean knight's tours on $3 \times 3 \times \cdots \times 3$ chessboards. Then, in order to constructively show that a Euclidean knight's tour exists only if $k > 4$, it is sufficient to point out that the (Euclidean) distance between the vertex $(1, 1, \ldots, 1)$ and one of the farthest $2^k$ vertices (\textit{corners}) of the $k$-cube $\{[0,2] \times [0,2] \times \cdots \times [0,2]\} \subseteq \mathbb{R}^k$ (i.e., the distance between $(1, 1, \ldots, 1)$ and any element of $\{\{0,2\}\times \{0,2\} \times \cdots \times \{0,2\}\}\subseteq \mathbb{Z}^k$) is equal to $\sqrt{k}$, for any positive integer $k$.

Hence, starting at $(1, 1, \ldots, 1)$, by Equation (\ref{eq:2}), $5$ is the minimum value of $k$ such that our Euclidean knight can make one single move on the $3 \times 3 \times \cdots \times 3$ grid.

In this regard, let us observe how Qing and Watkins indicated a different way to extend in $3$D the planar knight's move pattern by proposing, in \cite{9}, pp. 45--46, that every knight jump has to mandatorily change all the $k=3$ Cartesian coordinates of its starting vertex (by $2^0$, $2^1$, and $2^2$). Although this personal interpretation of Article~3.6 of \cite{1} is obviously not compatible with the existence of any knight's tour on $3 \times 3 \times \cdots \times 3$ grids (since the Euclidean distance between $(1,1,\ldots, 1)$ and $(0,0,\ldots, 0)$ is equal to $\sqrt{k}$, which is clearly smaller than $\sqrt{{{(2^0)}^2}+{{(2^1)}^2}+{{(2^2)}^2}+\cdots+{(2^{k-1})}^2}=\sqrt{\sum_{j=0}^{k-1}4^j}=\frac{\sqrt{4^k-1}}{\sqrt{3}}$ for any $k>1$), the underlying idea of a $k$-knight that can change (or has to mandatorily change) the values of all its $k$ Cartesian coordinates by performing a single move is fascinating and useful \cite{19}.

Now we are ready to prove that an open Euclidean knight's tour actually exists if $n=3$ and $k$ is set at $5$, so the trivial consideration that the knight graph is not connected for any $3 \times 3 \times \cdots \times 3$ board does not apply anymore, as the $k$-knight is a Euclidean $k$-knight.

\begin{theorem} \label{Theorem 2.1}
Let $h \in \{0,1,2,\ldots,3^k-1\}$ and assume that the knight move rule from the vertex ${\rm{V}}_h \equiv (x_1,x_2, \ldots, x_k)$ to the next vertex, ${\rm{V}}_{h+1} \equiv (y_1,y_2, \ldots, y_k)$, of $C(3,k) \coloneqq \{0,1,2\}^k$ is given by $d({{\rm{V}}}_h, {\rm{V}}_{h+1}) \coloneqq \sqrt{\sum_{j=1}^{k}(x_j-y_j)^2}=\sqrt{5}$. Then, the minimum value of $k$ that produces a knight's tour on $C(3,k)$ is $5$.
\end{theorem}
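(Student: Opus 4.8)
The plan is to split the statement into a lower bound (showing $k \ge 5$ is necessary) and a matching construction (showing $k = 5$ suffices). For the lower bound I would argue directly that the central vertex $(1,1,\ldots,1)$ is isolated in the Euclidean knight graph whenever $k \le 4$. Indeed, to remain inside $\{0,1,2\}^k$, a jump starting at the center must change each coordinate by some $v_j$ with $1 + v_j \in \{0,1,2\}$, i.e. $v_j \in \{-1,0,1\}$, so that $v_j^2 \in \{0,1\}$; by Equation~\eqref{eq:2} a legal move requires $\sum_{j=1}^{k} v_j^2 = 5$, which forces at least five nonzero coordinates and hence $k \ge 5$. For $2 \le k \le 4$ the center therefore has no incident edge, and since $C(3,k)$ has $3^k \ge 9 > 1$ vertices, an isolated vertex rules out any Hamiltonian path; the case $k = 1$ is immediate because the largest squared distance available on $\{0,1,2\}$ is $4 < 5$. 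This settles the necessity of $k \ge 5$ and, incidentally, recovers the classical obstruction (the center is \emph{a fortiori} isolated for the ordinary knight, whose only admissible jump type changes a single coordinate by $\pm 2$).

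For the upper bound I would exhibit an explicit open Euclidean knight's tour on the $3^5 = 243$ vertices of $C(3,5)$. The structural fact that makes $k = 5$ the turning point is that Equation~\eqref{eq:2} now admits exactly two families of jump vectors (up to signs and permutations of the coordinates): the ``classical'' type with absolute displacements $\{2,1,0,0,0\}$, and the new ``all-ones'' type $\{1,1,1,1,1\}$. It is precisely the latter that connects the center to the $2^5$ corners in $\{0,2\}^5$ and thereby breaks the isolation that dooms every ordinary knight's tour. I would organize the search around this observation---routing the path through the center via an all-ones move into a corner---and then verify the resulting candidate sequence $({\rm{V}}_0, {\rm{V}}_1, \ldots, {\rm{V}}_{242})$ by checking that every consecutive pair satisfies $\sum_{j=1}^{5}(x_j - y_j)^2 = 5$ and that all $243$ tuples are distinct.

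To make the construction tractable I would exploit the symmetry group of $C(3,5)$---the coordinate permutations together with the reflections $x_j \mapsto 2 - x_j$, of order $2^5 \cdot 5! = 3840$---to cut down the effective number of vertices and to stitch together partial paths in a symmetric fashion. The main obstacle is clearly the upper bound: unlike the lower bound, which is a short counting observation, producing a Hamiltonian path on $243$ vertices resists a clean inductive decomposition. In particular, each of the three slabs obtained by fixing the last coordinate contains its own slab-center $(1,1,1,1,c)$ that is isolated with respect to intra-slab moves (such a move needs $v_5 = 0$ and $\sum_{j=1}^{4} v_j^2 = 5$ with $v_j \in \{-1,0,1\}$, which is impossible), so one cannot tour the slabs independently and reconnect them. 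I therefore expect the heart of the proof to be an explicit, likely computer-assisted, witness tour whose validity is then confirmed by the elementary check described above.
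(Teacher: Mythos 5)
Your proposal takes essentially the same route as the paper: necessity of $k \geq 5$ is proved by showing the centre $(1,1,\ldots,1)$ has no legal move for $k<5$ (the paper phrases this as $d((0,\ldots,0),(1,\ldots,1))=\sqrt{k}<\sqrt{5}$, the corner being the farthest vertex from the centre, which is equivalent to your sum-of-squares count forcing five nonzero $\pm 1$ displacements), and sufficiency is established by an explicit open tour on $C(3,5)$. The only substantive difference is that the paper actually supplies the $243$-vertex witness $P_o(3,5)$ --- which is the entire content of the sufficiency half --- whereas you leave it as a promissory note to a computer search; your structural remarks (exactly two jump types, $\{2,1,0,0,0\}$ and $\{1,1,1,1,1\}$, and the intra-slab isolation of the slab-centres $(1,1,1,1,c)$) are correct and consistent with how the paper's tour in fact threads those vertices via all-ones moves.
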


\begin{proof}
As we have already observed, $d((0,0,\ldots,0), (1,1,\ldots,1))=\sqrt{k}$ (for any $k \in \mathbb{Z}^+$) and this implies that $d((0,0,\ldots,0), (1,1,\ldots,1)) < \sqrt{5}$ if and only if $k < 5$. Thus, $k$ cannot be less than $5$.

Accordingly, let us constructively prove Theorem~\ref{Theorem 2.1} by simply providing the sequence of the $243$ Cartesian coordinates that describe an open knight's tour on the set $C(3,5)$ (see Figure~\ref{fig:Figure_1} for a graphical proof), since every vertex is visited exactly once and the Euclidean length of each knight jump is equal to $\sqrt{5}$.

Then, the polygonal chain $P_o(3,5) \coloneqq (1,0,2,0,1)\rightarrow(2,0,2,2,1)\rightarrow(2,2,2,2,0)\rightarrow(1,0,2,2,0)\rightarrow(1,1,0,2,0)\rightarrow(0,1,2,2,0)\rightarrow(2,0,2,2,0)\rightarrow(1,2,2,2,0)\rightarrow(0,0,2,2,0)\rightarrow(2,1,2,2,0)\rightarrow(0,2,2,2,0)\rightarrow(0,0,1,2,0)\rightarrow(1,2,1,2,0)\rightarrow(1,1,1,0,0)\rightarrow(2,1,1,2,0)\rightarrow(0,2,1,2,0)\rightarrow(1,0,1,2,0)\rightarrow(2,2,1,2,0)\rightarrow(0,1,1,2,0)\rightarrow(2,0,1,2,0)\rightarrow(2,2,0,2,0)\rightarrow(1,0,0,2,0)\rightarrow(1,1,2,2,0)\rightarrow(0,1,0,2,0)\rightarrow(2,0,0,2,0)\rightarrow(1,2,0,2,0)\rightarrow(0,0,0,2,0)\rightarrow(2,1,0,2,0)\rightarrow(0,2,0,2,0)\rightarrow(0,0,0,1,0)\rightarrow(1,2,0,1,0)\rightarrow(1,1,2,1,0)\rightarrow(2,1,0,1,0)\rightarrow(0,2,0,1,0)\rightarrow(1,0,0,1,0)\rightarrow(2,2,0,1,0)\rightarrow(0,1,0,1,0)\rightarrow(2,0,0,1,0)\rightarrow(2,2,1,1,0)\rightarrow(1,0,1,1,0)\rightarrow(1,1,1,1,2)\rightarrow(0,1,1,1,0)\rightarrow(2,0,1,1,0)\rightarrow(1,2,1,1,0)\rightarrow(0,0,1,1,0)\rightarrow(2,1,1,1,0)\rightarrow(0,2,1,1,0)\rightarrow(0,0,2,1,0)\rightarrow(1,2,2,1,0)\rightarrow(1,1,0,1,0)\rightarrow(2,1,2,1,0)\rightarrow(0,2,2,1,0)\rightarrow(1,0,2,1,0)\rightarrow(2,2,2,1,0)\rightarrow(0,1,2,1,0)\rightarrow(2,0,2,1,0)\rightarrow(2,2,2,0,0)\rightarrow(1,0,2,0,0)\rightarrow(1,1,0,0,0)\rightarrow(0,1,2,0,0)\rightarrow(2,0,2,0,0)\rightarrow(1,2,2,0,0)\rightarrow(0,0,2,0,0)\rightarrow(2,1,2,0,0)\rightarrow(0,2,2,0,0)\rightarrow(0,0,1,0,0)\rightarrow(1,2,1,0,0)\rightarrow(1,1,1,2,0)\rightarrow(2,1,1,0,0)\rightarrow(0,2,1,0,0)\rightarrow(1,0,1,0,0)\rightarrow(2,2,1,0,0)\rightarrow(0,1,1,0,0)\rightarrow(2,0,1,0,0)\rightarrow(2,2,0,0,0)\rightarrow(1,0,0,0,0)\rightarrow(1,1,2,0,0)\rightarrow(0,1,0,0,0)\rightarrow(2,0,0,0,0)\rightarrow(1,2,0,0,0)\rightarrow(0,0,0,0,0)\rightarrow(2,1,0,0,0)\rightarrow \underline{(0,2,0,0,0)} \rightarrow \underline{(1,1,1,1,1)}\rightarrow\underline{(0,2,0,0,2)} \rightarrow(2,1,0,0,2)\rightarrow(0,0,0,0,2)\rightarrow(1,2,0,0,2)\rightarrow(2,0,0,0,2)\rightarrow(0,1,0,0,2)\rightarrow(1,1,2,0,2)\rightarrow(1,0,0,0,2)\rightarrow(2,2,0,0,2)\rightarrow(2,0,1,0,2)\rightarrow(0,1,1,0,2)\rightarrow(2,2,1,0,2)\rightarrow(1,0,1,0,2)\rightarrow(0,2,1,0,2)\rightarrow(2,1,1,0,2)\rightarrow(1,1,1,2,2)\rightarrow(1,2,1,0,2)\rightarrow(0,0,1,0,2)\rightarrow(0,2,2,0,2)\rightarrow(2,1,2,0,2)\rightarrow(0,0,2,0,2)\rightarrow(1,2,2,0,2)\rightarrow(2,0,2,0,2)\rightarrow(0,1,2,0,2)\rightarrow(1,1,0,0,2)\rightarrow(1,0,2,0,2)\rightarrow(2,2,2,0,2)\rightarrow(2,0,2,1,2)\rightarrow(0,1,2,1,2)\rightarrow(2,2,2,1,2)\rightarrow(1,0,2,1,2)\rightarrow(0,2,2,1,2)\rightarrow(2,1,2,1,2)\rightarrow(1,1,0,1,2)\rightarrow(1,2,2,1,2)\rightarrow(0,0,2,1,2)\rightarrow(0,2,1,1,2)\rightarrow(2,1,1,1,2)\rightarrow(0,0,1,1,2)\rightarrow(1,2,1,1,2)\rightarrow(2,0,1,1,2)\rightarrow(0,1,1,1,2)\rightarrow(1,1,1,1,0)\rightarrow(1,0,1,1,2)\rightarrow(2,2,1,1,2)\rightarrow(2,0,0,1,2)\rightarrow(0,1,0,1,2)\rightarrow(2,2,0,1,2)\rightarrow(1,0,0,1,2)\rightarrow(0,2,0,1,2)\rightarrow(2,1,0,1,2)\rightarrow(1,1,2,1,2)\rightarrow(1,2,0,1,2)\rightarrow(0,0,0,1,2)\rightarrow(0,2,0,2,2)\rightarrow(2,1,0,2,2)\rightarrow(0,0,0,2,2)\rightarrow(1,2,0,2,2)\rightarrow(2,0,0,2,2)\rightarrow(0,1,0,2,2)\rightarrow(1,1,2,2,2)\rightarrow(1,0,0,2,2)\rightarrow(2,2,0,2,2)\rightarrow(2,0,1,2,2)\rightarrow(0,1,1,2,2)\rightarrow(2,2,1,2,2)\rightarrow(1,0,1,2,2)\rightarrow(0,2,1,2,2)\rightarrow(2,1,1,2,2)\rightarrow(1,1,1,0,2)\rightarrow(1,2,1,2,2)\rightarrow(0,0,1,2,2)\rightarrow(0,2,2,2,2)\rightarrow(2,1,2,2,2)\rightarrow(0,0,2,2,2)\rightarrow(1,2,2,2,2)\rightarrow(2,0,2,2,2)\rightarrow(0,1,2,2,2)\rightarrow(1,1,0,2,2)\rightarrow(1,0,2,2,2)\rightarrow(2,2,2,2,2)\rightarrow(0,2,2,2,1)\rightarrow(1,0,2,2,1)\rightarrow(2,2,2,2,1)\rightarrow(0,1,2,2,1)\rightarrow(1,1,0,2,1)\rightarrow(1,2,2,2,1)\rightarrow(1,1,2,0,1)\rightarrow(2,1,2,2,1)\rightarrow(0,0,2,2,1)\rightarrow(2,0,1,2,1)\rightarrow(1,2,1,2,1)\rightarrow(0,0,1,2,1)\rightarrow(2,1,1,2,1)\rightarrow(0,2,1,2,1)\rightarrow(1,0,1,2,1)\rightarrow(1,1,1,0,1)\rightarrow(0,1,1,2,1)\rightarrow(2,2,1,2,1)\rightarrow(2,0,0,2,1)\rightarrow(1,2,0,2,1)\rightarrow(0,0,0,2,1)\rightarrow(2,1,0,2,1)\rightarrow(0,2,0,2,1)\rightarrow(1,0,0,2,1)\rightarrow(1,1,2,2,1)\rightarrow(0,1,0,2,1)\rightarrow(2,2,0,2,1)\rightarrow(2,0,0,1,1)\rightarrow(1,2,0,1,1)\rightarrow(0,0,0,1,1)\rightarrow(2,1,0,1,1)\rightarrow(0,2,0,1,1)\rightarrow(1,0,0,1,1)\rightarrow(1,1,2,1,1)\rightarrow(0,1,0,1,1)\rightarrow(2,2,0,1,1)\rightarrow(2,0,0,0,1)\rightarrow(0,1,0,0,1)\rightarrow(2,2,0,0,1)\rightarrow(1,0,0,0,1)\rightarrow(0,2,0,0,1)\rightarrow(2,1,0,0,1)\rightarrow(0,0,0,0,1)\rightarrow(1,2,0,0,1)\rightarrow(1,0,1,0,1)\rightarrow(0,2,1,0,1)\rightarrow(2,1,1,0,1)\rightarrow(0,0,1,0,1)\rightarrow(1,2,1,0,1)\rightarrow(1,1,1,2,1)\rightarrow(0,1,1,0,1)\rightarrow(2,2,1,0,1)\rightarrow(2,0,1,1,1)\rightarrow(1,2,1,1,1)\rightarrow(0,0,1,1,1)\rightarrow(2,1,1,1,1)\rightarrow(0,2,1,1,1)\rightarrow(1,0,1,1,1)\rightarrow(2,2,1,1,1)\rightarrow(0,1,1,1,1)\rightarrow(2,1,2,1,1)\rightarrow(0,2,2,1,1)\rightarrow(1,0,2,1,1)\rightarrow(2,2,2,1,1)\rightarrow(0,1,2,1,1)\rightarrow(1,1,0,1,1)\rightarrow(1,2,2,1,1)\rightarrow(2,0,2,1,1)\rightarrow(0,0,2,0,1)\rightarrow(2,0,1,0,1)\rightarrow(2,2,2,0,1)\rightarrow(0,1,2,0,1)\rightarrow(2,0,2,0,1)\rightarrow(0,0,2,1,1)\rightarrow(0,2,2,0,1)\rightarrow(2,1,2,0,1)\rightarrow(1,1,0,0,1)\rightarrow(1,2,2,0,1)$ has link length $242$ and represents a knight's tour for the given set (let us highlight that also $d({{\rm{V}}}_{82}, {{\rm{V}}}_{83})=d({{\rm{V}}}_{83}, {{\rm{V}}}_{84})=\sqrt{5}$, even if this time the taxicab length \cite{2,3} of our knight jump is equal to $5$, instead of $3$ as for any other knight move), while the Euclidean distance between its starting point and ending point is $2$, so the knight tour is open.

\begin{figure}[H]
\begin{center}
\includegraphics[width=\linewidth]{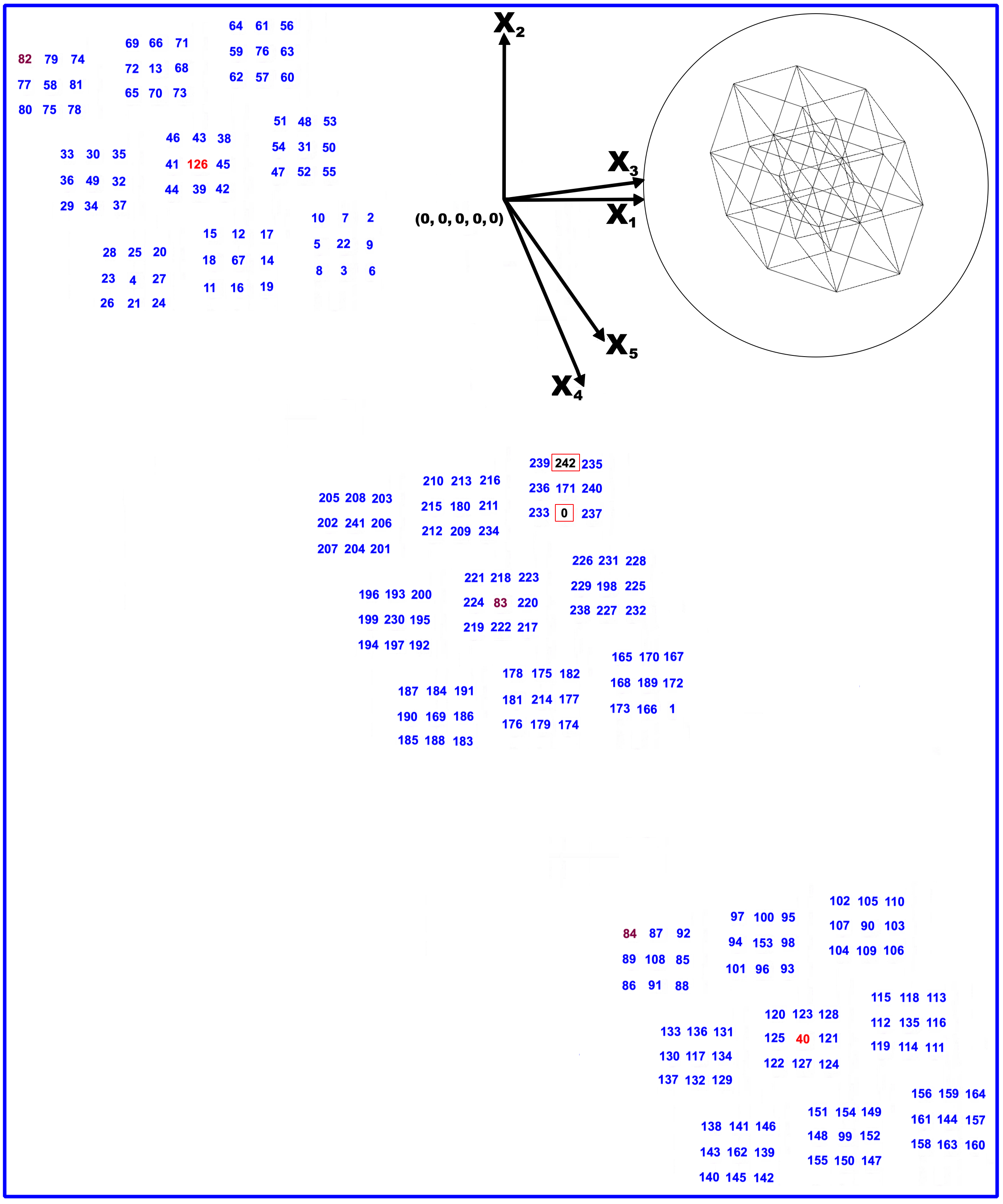}
\end{center}
\caption{A graphical representation of an open Euclidean knight's tour on $C(3,5) \coloneqq \{\{0,1,2\} \times \{0,1,2\}\times \{0,1,2\}\times \{0,1,2\} \times \{0,1,2\}\}$.}
\label{fig:Figure_1}
\end{figure}

\vspace{8mm}

Therefore, a knight tour exists for the set $\{\{0,1,2\} \times \{0,1,2\} \times \{0,1,2\} \times \{0,1,2\} \times \{0,1,2\}\} \subseteq \mathbb{Z}^5$, while it is impossible to achieve it on $C(3,k)$ if $k < 5$, and this concludes the proof of the theorem.
\end{proof}

\begin{corollary} \label{Corollary 2.2}
For any given $k \in \mathbb{N}-\{0,1\}$, it does not exist any closed Euclidean knight's tour on $C(3,k)$.
\end{corollary}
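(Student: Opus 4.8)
The plan is to exploit a parity coloring of the vertices of $C(3,k)$ and show that the Euclidean knight graph is bipartite, so that the odd cardinality $3^k$ immediately rules out any Hamiltonian cycle. This argument is purely structural and does not depend on the explicit tour constructed for Theorem~\ref{Theorem 2.1}.

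First I would classify every admissible Euclidean knight move. Since each coordinate difference $|x_j-y_j|$ lies in $\{0,1,2\}$, the squared differences appearing in Equation~(\ref{eq:2}) belong to $\{0,1,4\}$, and the only ways to write $5$ as an unordered sum of terms drawn from $\{1,4\}$ are $5=4+1$ and $5=1+1+1+1+1$. Hence a single jump is of exactly one of two types: a \emph{standard} move that alters one coordinate by $\pm 2$ and a second coordinate by $\pm 1$, or a \emph{purely Euclidean} move that alters five distinct coordinates each by $\pm 1$ (this second type being possible precisely because $k \geq 5$, and irrelevant for smaller $k$).

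Next I would track the parity of the coordinate sum $\sigma({\rm V}) \coloneqq \sum_{j=1}^{k} x_j$. Under a type-one move the sum changes by $\pm 2 \pm 1$, which is an odd integer; under a type-two move it changes by a sum of five terms each equal to $\pm 1$, again an odd integer. Therefore \emph{every} knight jump reverses the parity of $\sigma$, so the partition of $C(3,k)$ into even-sum and odd-sum vertices is a proper $2$-coloring: the Euclidean knight graph on $C(3,k)$ is bipartite.

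Finally I would invoke the elementary fact that every cycle in a bipartite graph has even length, whereas a closed Euclidean tour on $C(3,k)$ would be a Hamiltonian cycle whose length equals the number of vertices, namely $3^k$. Since $3^k$ is odd for all $k \geq 2$, no such cycle can exist, and the corollary follows. The only delicate point is confirming that the two move types above are genuinely exhaustive, but this is immediate from the partition of $5$ over $\{1,4\}$; once that is established the parity argument is automatic, and it applies uniformly to the standard jumps and to the anomalous taxicab-length-$5$ jumps (such as ${\rm V}_{82}\to{\rm V}_{83}\to{\rm V}_{84}$ in Theorem~\ref{Theorem 2.1}), since both flip the color class.
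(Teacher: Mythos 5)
Your proposal is correct and follows essentially the same route as the paper: both classify the admissible jumps via the decompositions $5=4+1$ and $5=1+1+1+1+1$, observe that each type flips the parity of the coordinate sum, and conclude from the odd cardinality $3^k$ that no Hamiltonian cycle can exist in the resulting bipartite graph. No substantive differences to report.
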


\begin{proof}
In order to prove Corollary~\ref{Corollary 2.2}, let us introduce the well-known parity argument \cite{9}.

Then, we have that the generic vertex $(x_1,x_2,\ldots,x_k)$ of $\{0,1,2,\ldots, n-1\}^k$ is a dark vertex if and only if $\sum_{j=1}^{k} x_j \equiv 0 \pmod{2}$, whereas any light vertex is such that $\sum_{j=1}^{k} x_j \equiv 1 \pmod{2}$ (i.e., if we add together the $k$ Cartesian coordinates of a dark vertex of $C$ the result is always an even number, otherwise the given vertex is a light vertex).
Figure~\ref{fig:Figure_3} shows how to consistently represent the set $C(3,5)$ as a $5$D chessboard.

\begin{figure}[H]
\begin{center}
\includegraphics[width=\linewidth]{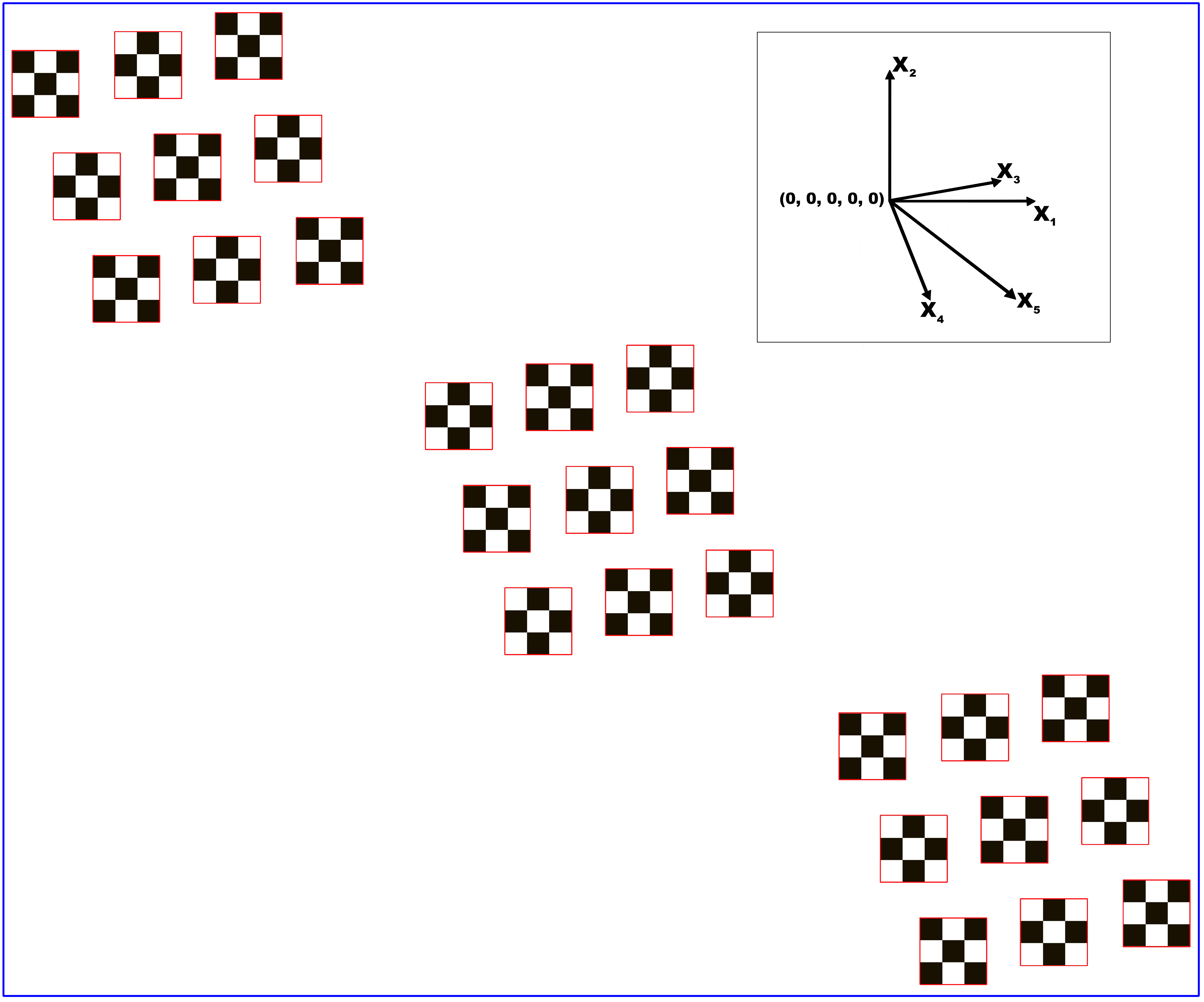}
\end{center}
\caption{Coloring the $3 \times 3 \times 3 \times 3 \times 3$ chessboard in a proper way \linebreak (thanks to the parity argument).}
\label{fig:Figure_3}
\end{figure}

\newpage Thus, we invoke the parity argument by observing that the knight (including the Euclidean knight, which is subject to the same constraint by construction since $\left | 1 \right |+\left | 1 \right |+\left | 1 \right |+\left | 1 \right |+\left | 1 \right |$ is odd as $\left | 1 \right | + \left | 2 \right |$) can only move from a dark square to a light square and vice versa (see \cite{20}, Figure~2).
For this purpose, let us observe that the taxicab length \cite{3} of any Euclidean knight jump has to necessarily be $3$ or $5$ since all the Diophantine equations of the form $5=t_1^2+t_2^2+\cdots +t_k^2$ (see Equation (\ref{eq:2})) admit only two types of solutions, one having only $2$ non-zero terms (i.e., a $\pm 1$ term and a $\pm 2$ term) and a second set of solutions which is characterized by $5$ non-zero terms that are all elements of $\{-1, 1\}$.

Hence, a necessary (but not sufficient) condition for having a closed (possibly Euclidean) knight's tour on $C(n,k)$ is that the number of light vertices is equal to the number of dark vertices \cite{9}, and this is impossible if $n=3$ since $3^k$ is odd for any $k$.

Therefore, every $3 \times 3 \times \cdots \times 3$ chessboard does not admit any closed Euclidean knight's tour.
\end{proof}

\begin{corollary} \label{Corollary 2.3}
A Euclidean knight can visit every vertex of $C(3,5)$ and then return to the vertex on which it began by performing no more than $3^5+1$ jumps.
\end{corollary}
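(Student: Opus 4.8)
The plan is to build the required closed walk by concatenating the open Euclidean knight's tour $P_o(3,5)$ of Theorem~\ref{Theorem 2.1} with a minimal ``return leg'' that carries the knight from the arrival vertex back to the departure vertex. Writing $\mathrm{V}_0 \equiv (1,0,2,0,1)$ for the starting vertex of $P_o(3,5)$ and $\mathrm{V}_{242}\equiv(1,2,2,0,1)$ for its ending vertex, we already know from the proof of Theorem~\ref{Theorem 2.1} that $d(\mathrm{V}_0,\mathrm{V}_{242})=2\neq\sqrt5$, so the tour cannot be closed by a single jump; moreover, Corollary~\ref{Corollary 2.2} rules out any genuine closed tour on $C(3,5)$, so the return leg is explicitly permitted to re-enter vertices that $P_o(3,5)$ has already visited.

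Since $P_o(3,5)$ realizes $3^5-1=242$ jumps, it suffices to exhibit a return leg consisting of exactly two knight jumps, which yields a closed walk of total length $242+2=244=3^5+1$. To this end I would search for a vertex $\mathrm{W}\in C(3,5)$ with $d(\mathrm{V}_{242},\mathrm{W})=d(\mathrm{W},\mathrm{V}_0)=\sqrt5$; equivalently, I would decompose the needed displacement $\mathrm{V}_0-\mathrm{V}_{242}=(0,-2,0,0,0)$ as a sum $u+v$ of two knight vectors, each of squared Euclidean length $5$. Requiring $\lvert u\rvert^2=\lvert v\rvert^2=5$ forces $u\cdot(\mathrm{V}_0-\mathrm{V}_{242})=\tfrac12\lvert \mathrm{V}_0-\mathrm{V}_{242}\rvert^2=2$, i.e.\ the second component of $u$ must equal $-1$, after which the remaining four components satisfy $u_1^2+u_3^2+u_4^2+u_5^2=4$. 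One admissible choice is $\mathrm{W}=(1,1,0,0,1)$, producing the closed walk $\mathrm{V}_0\rightarrow\cdots\rightarrow\mathrm{V}_{242}\rightarrow(1,1,0,0,1)\rightarrow\mathrm{V}_0$, since both appended jumps have squared length $0^2+1^2+2^2+0^2+0^2=5$.

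The only genuine point to verify is that the two-jump decomposition of $(0,-2,0,0,0)$ can be realized without leaving the board, and this is the step I would expect to be the (mild) obstacle: the ``natural'' candidates $u=(\pm2,-1,0,0,0)$ push the first coordinate of $\mathrm{V}_{242}$ outside $\{0,1,2\}$, so one is obliged to act on the third coordinate instead, as above. Finally, the bound $3^5+1$ is optimal: both $\mathrm{V}_0$ and $\mathrm{V}_{242}$ are dark vertices in the sense of Corollary~\ref{Corollary 2.2}, while every Euclidean knight jump swaps the color of the occupied cell, so a single return jump would land on a light vertex and could never reach $\mathrm{V}_0$; hence at least two extra jumps are necessary, and we have exhibited exactly two. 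Therefore a Euclidean knight can traverse all of $C(3,5)$ and return to its origin in no more than $3^5+1$ jumps.
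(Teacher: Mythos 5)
Your proof is correct and takes essentially the same route as the paper: both close the open tour $P_o(3,5)$ by appending the two jumps $(1,2,2,0,1)\rightarrow(1,1,0,0,1)\rightarrow(1,0,2,0,1)$, revisiting the single vertex $(1,1,0,0,1)$, for a total of $242+2=3^5+1$ jumps. Your systematic derivation of the intermediate vertex and the parity argument showing two extra jumps are necessary are welcome additions, but the core construction is identical to the paper's.
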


\begin{proof}
It is sufficient to observe that we can close the polygonal chain $P_o(3,5)$ (see proof of Theorem \ref{Theorem 2.1} and Figure \ref{fig:Figure_1}) by visiting the vertex $(1,1,0,0,1)$ on move $243$, finally reaching the starting vertex, $(1,0,2,0,1)$, on move $244$ (i.e., we visit twice the vertex $(1,1,0,0,1)$). Then, the statement of Corollary~\ref{Corollary 2.3} trivially follows.
\end{proof}


\section{Closed knight's tours on \texorpdfstring{$C(3,k)-\{(1,1,\ldots,1)\}$}{C(3,k)-{1,1,...,1}}} \label{sec:3}

In this section, we introduce the problem of finding closed (possibly Euclidean) knight's tours on given $k$-dimensional grids of the form $\{\{0,1,2\} \times \{0,1,2\} \times \cdots \times \{0,1,2\}\}-\{(1,1,\ldots,1)\}$ (for planar knight's tours on rectangular chessboards with holes, see \cite{10} and \cite{11}).

Then, we constructively show that a closed regular knight's tour certainly exists on $C(3,k)-{(1,1,\ldots,1)}$ if $k \in \{2,4\}$ (i.e., here we consider the usual $k$D grids of rank $3$ without their central vertex, together with the classical knight that can only make L-shaped moves of taxicab length $3$), where the existence of a closed (regular) knight's tour represents a sufficient condition for the existence of an open knight's tour on the same set of vertices, even under the constraint of having the arrival of the polygonal chain at a Euclidean distance of $\sqrt{k}$ from $(1,1,\ldots, 1)$.

Since any closed/open Euclidean knight's tour is also a closed/open regular knight's tour as long as $k<5$, for the sake of simplicity, we take into account here only the sets $C(3,2)-\{(1,1)\}$, $C(3,3)-\{(1,1,1)\}$, and $C(3,4)-\{(1,1,1,1)\}$, showing that a closed knight's tour exists for the aforementioned $2$D and $4$D cases.

Thus, if $k=2$, then a valid solution to the stated problem is given by the trivial polygonal chain $P_c(3,2;\{(1,1)\}) \coloneqq (2,1)\rightarrow(0,2)\rightarrow(1,0)\rightarrow(2,2)\rightarrow(0,1)\rightarrow(2,0)\rightarrow(1,2)\rightarrow(0,0)$ (see Figure~\ref{fig:Figure_2}, and also Figure~1 of Reference \cite{11} for the related cycle $P_c(3,2;\{(1,1)\}) \cup \{(0,0)\rightarrow(2,1)\}$).

If $k=3$, the best achievable sequence of knight jumps has length $24$ (see \cite{13}, page 66, Figures 4\&5).

In addition, Figure~\ref{fig:Figure_2} shows another polygonal chain, $\overline{P}(3,3;\{(1,1,1),(2,0,0)\}) \coloneqq (0,0,0)\rightarrow(1,2,0)\rightarrow(1,1,2)\rightarrow(0,1,0)\rightarrow(2,2,0)\rightarrow(1,0,0)\rightarrow(0,2,0)\rightarrow(2,1,0)\rightarrow(0,1,1)\rightarrow(2,2,1)\rightarrow(1,0,1)\rightarrow(0,2,1)\rightarrow(2,1,1)\rightarrow(0,0,1)\rightarrow(1,2,1)\rightarrow(2,0,0)\rightarrow(2,2,2)\rightarrow(1,0,2)\rightarrow(0,2,2)\rightarrow(2,1,2)\rightarrow(0,0,2)\rightarrow(1,2,2)\rightarrow(1,1,0)\rightarrow(0,1,2)\rightarrow(2,0,2)$, consisting of $25$ nodes and $24$ links too.
Then, by looking at the arrival of $\overline{P}(3,3;\{(1,1,1),(2,0,0)\})$, it is immediate to note that we would have got an open Euclidean knight's tour on $C(3,3)$ if the distance between the corners of $\{\{0,1,2\}\times \{0,1,2\}\times \{0,1,2\}\}$ and the center of the same grid would have been equal to $\sqrt{5}$, instead of $\sqrt{3}$.

\begin{figure}[H]
\begin{center}
\includegraphics[width=\linewidth]{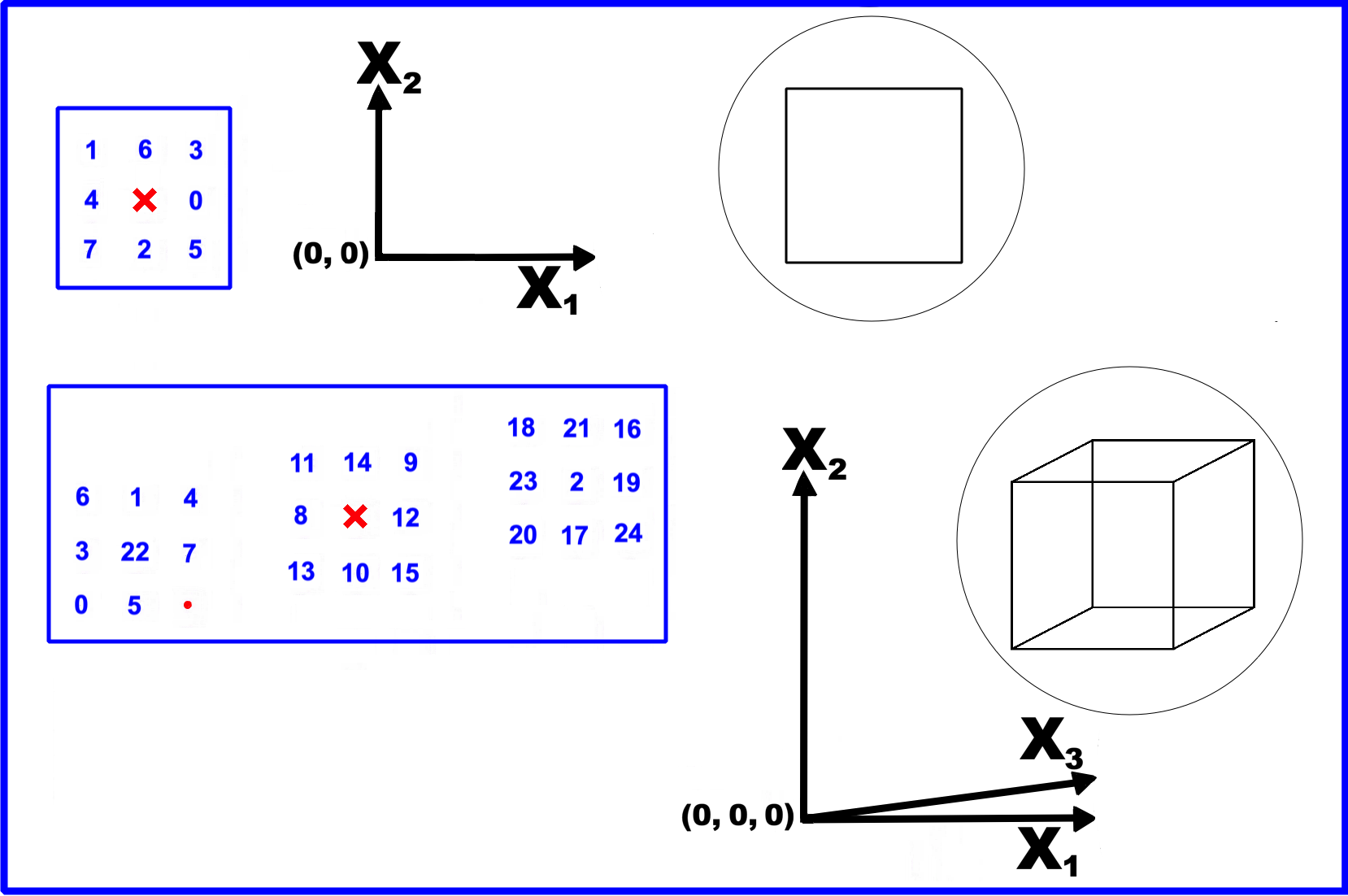}
\end{center}
\caption{The polygonal chains $P_c(3,2;\{(1,1)\})$ and $\overline{P}(3,3;\{(1,1,1),(2,0,0)\})$ visit all the vertices of $C(3,2)-\{(1,1)\}$ and $C(3,3)-\{(1,1,1), (2,0,0)\}$, respectively. Both of them follow the regular (and thus also the Euclidean) knight move rule and end at the corners of $C$.}
\label{fig:Figure_2}
\end{figure}

Now, let $k=4$.
Then, it is certainly possible to achieve closed knight's tours on $C(3,4)-\{(1,1,1,1)\}$, as shown by the Hamiltonian cycle $P_c(3,4;\{(1,1,1,1)\}) \cup \{(0,2,0,1) \rightarrow(0,0,0,2)\}$ (see Figure~\ref{fig:Figure_4}), where $P_c(3,4;\{(1,1,1,1)\}) \coloneqq (0,0,0,2)\rightarrow(2,1,0,2)\rightarrow(0,2,0,2)\rightarrow(1,0,0,2)\rightarrow(2,2,0,2)\rightarrow(0,1,0,2)\rightarrow(1,1,2,2)\rightarrow(1,2,0,2)\rightarrow(2,0,0,2)\rightarrow(0,0,1,2)\rightarrow(1,2,1,2)\rightarrow(2,0,1,2)\rightarrow(0,1,1,2)\rightarrow(2,2,1,2)\rightarrow(1,0,1,2)\rightarrow(1,1,1,0)\rightarrow(2,1,1,2)\rightarrow(0,2,1,2)\rightarrow(0,0,2,2)\rightarrow(1,2,2,2)\rightarrow(2,0,2,2)\rightarrow(0,1,2,2)\rightarrow(2,2,2,2)\rightarrow(1,0,2,2)\rightarrow(1,1,2,0)\rightarrow(2,1,2,2)\rightarrow(0,2,2,2)\rightarrow(0,0,2,1)\rightarrow(1,2,2,1)\rightarrow(2,0,2,1)\rightarrow(0,1,2,1)\rightarrow(2,2,2,1)\rightarrow(1,0,2,1)\rightarrow(1,1,0,1)\rightarrow(2,1,2,1)\rightarrow(0,2,2,1)\rightarrow(0,0,2,0)\rightarrow(1,2,2,0)\rightarrow(2,0,2,0)\rightarrow(0,1,2,0)\rightarrow(2,2,2,0)\rightarrow(1,0,2,0)\rightarrow(1,1,0,0)\rightarrow(2,1,2,0)\rightarrow(0,2,2,0)\rightarrow(0,0,1,0)\rightarrow(2,1,1,0)\rightarrow(0,2,1,0)\rightarrow(1,0,1,0)\rightarrow(2,2,1,0)\rightarrow(0,1,1,0)\rightarrow(1,1,1,2)\rightarrow(1,2,1,0)\rightarrow(2,0,1,0)\rightarrow(0,0,1,1)\rightarrow(1,2,1,1)\rightarrow(2,0,1,1)\rightarrow(0,1,1,1)\rightarrow(2,2,1,1)\rightarrow(1,0,1,1)\rightarrow(0,2,1,1)\rightarrow(2,1,1,1)\rightarrow(0,1,0,1)\rightarrow(2,0,0,1)\rightarrow(1,2,0,1)\rightarrow(0,0,0,1)\rightarrow(2,1,0,1)\rightarrow(1,1,2,1)\rightarrow(1,0,0,1)\rightarrow(2,2,0,1)\rightarrow(2,0,0,0)\rightarrow(1,2,0,0)\rightarrow(0,0,0,0)\rightarrow(2,1,0,0)\rightarrow(0,2,0,0)\rightarrow(1,0,0,0)\rightarrow(1,1,0,2)\rightarrow(0,1,0,0)\rightarrow(2,2,0,0)\rightarrow(0,2,0,1)$.

\begin{figure}[H]
\begin{center}
\includegraphics[width=\linewidth]{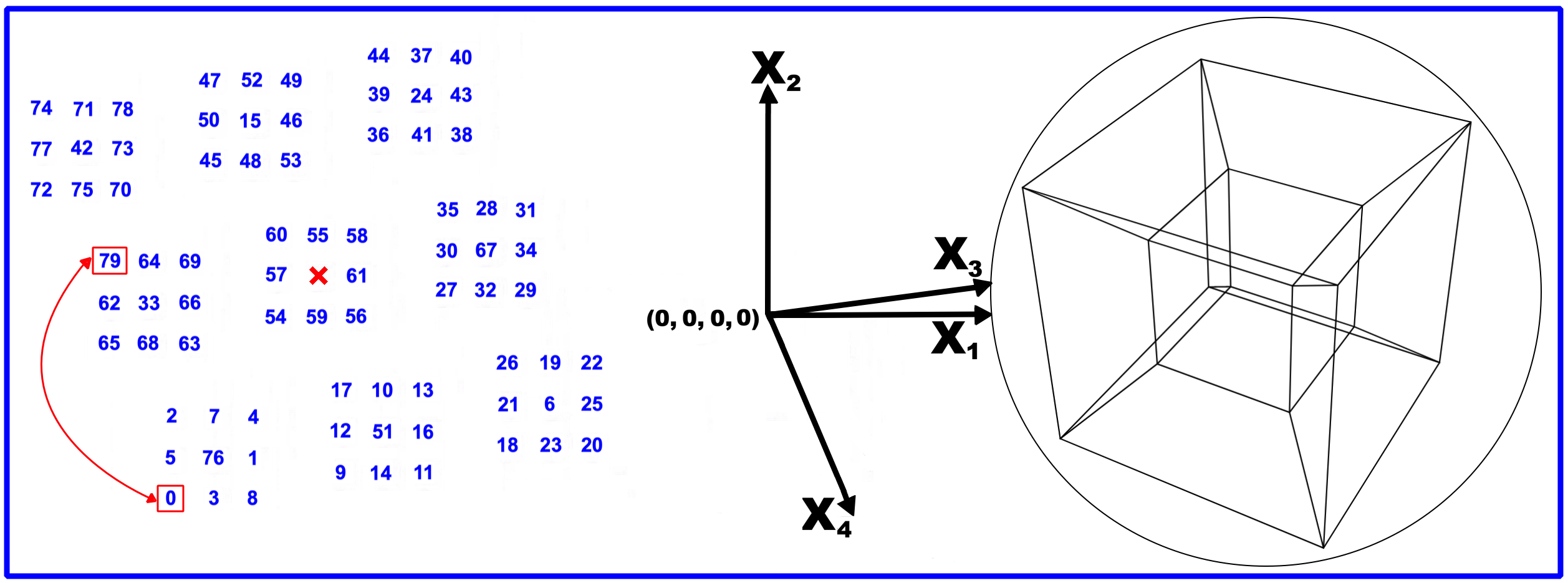}
\end{center}
\caption{A graphical representation of the closed (regular) knight's tour $P_c(3,4;\{(1,1,1,1)\})$ on $C(3,4)-\{(1,1,1,1)\}$.}
\label{fig:Figure_4}
\end{figure}

Thus, we have constructively proven the existence of a closed knight's tour also for \linebreak $C(3,4)-\{(1,1,1,1)\}$, and we can conjecture the existence of closed Euclidean knight's tours on $C(3,k)-\{(1,1,\ldots,1)\}$ for each $k \in \mathbb{Z}^+ : k \equiv 0 \pmod{2}$.


\section{Closed Euclidean knight's tours on any \texorpdfstring{$C(2,k) : k \geq 6$}{C(2,k) : k >= 6}} \label{sec:4}

This section is devoted to providing a counterexample to Theorem~3 of Reference \cite{17} by assuming that the knight is also a Euclidean knight, as defined in Section~\ref{sec:Intr}. 
Furthermore, we prove a more general theorem on the existence of closed Euclidean knight's tours for any ($k$-dimensional) metric space $C(2,k)$ \cite{21} such that $k \geq 6$.

In detail, the mentioned Theorem~3 assumes $k \geq 3$ and states that a $k$-dimensional rectangular grid of the form $n_1 \times n_2 \times \cdots \times n_k$, such that $2 \leq n_1 \leq n_2 \leq \cdots \leq n_k$, admits a closed knight's tour if and only if the following three conditions hold:

\begin{enumerate}
\item $\prod_{j=1}^{k} n_j \equiv 0 \pmod 2$,
\item $n_{k-1} \geq 3$,
\item $n_{k} \geq 4$.
\end{enumerate}

Thus, if $n_1 = n_2 = \cdots = n_k = 2$, Theorem~3 of Reference \cite{17} would imply that $C(2,k)$ does not admit any closed knight tour, but this is no longer true if closed Euclidean knight's tours are included.

\begin{theorem} \label{Theorem 4.1}
Euclidean knight's tours exist on a $\underbrace{2 \times 2 \times \cdots \times 2}_\textrm{\textit{k}-times}$ chessboard if and only if $k \geq 6$. Furthermore, if there is a Euclidean knight's tour on a given $2 \times 2 \times \cdots \times 2$ chessboard, then closed Euclidean knight's tours exist for the same chessboard.
\end{theorem}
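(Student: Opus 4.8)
The plan is to pass to the language of the Boolean hypercube and then induct on $k$. First I would note that on $C(2,k)=\{0,1\}^k$ every coordinate difference $x_j-y_j$ lies in $\{-1,0,1\}$, so $(x_j-y_j)^2\in\{0,1\}$; by Equation~(\ref{eq:2}) a Euclidean knight move is therefore \emph{exactly} a flip of $5$ coordinates, i.e.\ a step of Hamming distance $5$. This reduction immediately settles the \emph{only if} direction. If $k\le 4$ there are not enough coordinates to flip five of them, so no vertex admits a single legal move. If $k=5$ the only legal move from any vertex is to flip all five bits (its complement), so every vertex has degree $1$ and the knight graph is a perfect matching on the $2^5=32$ vertices; being a disjoint union of $16$ edges it is disconnected and has no Hamiltonian path. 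Hence no Euclidean knight's tour, open or closed, can exist when $k\le 5$.

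For the \emph{if} direction I would construct a closed tour for every $k\ge 6$ by induction, taking the explicit closed Euclidean knight's tour on $\{0,1\}^6$ exhibited in this section as the base case. For the inductive step, assume a closed Euclidean knight's tour $H$ exists on $C(2,k)$ with $k\ge 6$, written as a cyclic sequence $v_0\to v_1\to\cdots\to v_{2^k-1}\to v_0$ of vertices of $\{0,1\}^k$ with consecutive Hamming distance $5$. Split $C(2,k+1)$ according to the value of the last coordinate into $A$ (last coordinate $0$) and $B$ (last coordinate $1$): a move fixing the last coordinate flips $5$ of the first $k$ coordinates, so the knight graph induced on each of $A$ and $B$ is an isomorphic copy of the knight graph on $C(2,k)$. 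I would place $H$ in $A$ as the cycle on the vertices $(v_i,0)$, and place a \emph{translated} copy of $H$ in $B$, namely the cycle on the vertices $(v_i+t,1)$, where $t\in\{0,1\}^k$ is any fixed vector of Hamming weight $4$ (such $t$ exists since $k\ge 6$). Because translation by $t$ preserves Hamming distances, the latter is again a Hamiltonian cycle of $B$. The crucial observation is that for every index $i$ the vertices $(v_i,0)$ and $(v_i+t,1)$ differ precisely in the $4$ coordinates of $t$ together with the last coordinate, i.e.\ in exactly $5$ coordinates, so each such \emph{vertical} pair is a legal knight edge of $C(2,k+1)$.

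The merge then follows from a single two-edge swap. I would delete the edge $(v_0,0)\to(v_1,0)$ from the cycle in $A$ and the edge $(v_0+t,1)\to(v_1+t,1)$ from the cycle in $B$; each deletion turns its cycle into a Hamiltonian path, with endpoints $(v_0,0),(v_1,0)$ and $(v_0+t,1),(v_1+t,1)$ respectively. Joining these two vertex-disjoint paths by the two vertical knight edges $(v_0,0)\to(v_0+t,1)$ and $(v_1,0)\to(v_1+t,1)$ reconnects them into a single closed walk visiting all $2^{k+1}$ vertices exactly once, that is, a closed Euclidean knight's tour on $C(2,k+1)$. This completes the induction and yields closed tours for all $k\ge 6$.

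Finally I would assemble the two assertions. The first (tours exist iff $k\ge 6$) is now immediate: a closed tour is in particular a tour, so $k\ge 6$ suffices, while $k\le 5$ admits none. For the second assertion, if \emph{any} Euclidean knight's tour exists on a given $2\times\cdots\times 2$ board then the \emph{only if} argument forces $k\ge 6$, and for every such $k$ the inductive construction furnishes a closed tour on the \emph{same} board; hence the existence of a tour already forces the existence of a closed tour. The hard part will not be the structural argument — once $t$ is chosen of weight $4$ the vertical edges are legal for every $i$ and the swap provably glues the two Hamiltonian paths into one cycle — but rather the genuinely laborious verification that the explicit $k=6$ sequence is a valid closed tour, which underpins the whole induction.
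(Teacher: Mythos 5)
Your proposal is correct and follows essentially the same route as the paper: the \emph{only if} direction via the observation that a Euclidean knight move on $\{0,1\}^k$ is exactly a flip of five coordinates (so $k\le 4$ admits no moves and $k=5$ yields a perfect matching), and the \emph{if} direction by induction from the explicit closed tour on $C(2,6)$, doubling across a new coordinate with the second copy shifted by a weight-$4$ XOR so that the two cycles are merged through Hamming-distance-$5$ cross edges. Your two-edge cycle swap is merely a cleaner formalization of the paper's ``reflect, reverse, and concatenate the two face paths'' description of the same operation.
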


\begin{proof}
In order to state the necessary and sufficient condition for the existence of Euclidean knight's tours on $2 \times 2 \times \cdots \times 2$ chessboards, let us preliminary point out that a Euclidean knight can move also on a $2 \times 2 \times \cdots \times 2$ chessboard, but this is possible only if $k\geq 5$, since here is mandatory that a Euclidean knight changes by one $5$ Cartesian coordinates at any jump, given the fact that $n < 3$ does not let the knight make its well-known L-shaped move.
Additionally, it is trivial to observe that only one move is possible on $C(2,5)$ since the maximum distance between two vertices of a $k$-cube corresponds to the distance between opposite pairs of corners, which is always equal to $\sqrt{k}$. Consequently, given $x_1, x_2, x_3, x_4, x_5 \in \{0,1\}$, if the (Euclidean) knight starts at $(x_1, x_2, x_3, x_4, x_5)$, it can only jump to $(|x_1-1|, |x_2-1|, |x_3-1|, |x_4-1|, |x_5-1|)$ and then none of the unvisited vertices of $C(2,5)$ is far enough to allow a second move.

Accordingly, we prove Theorem \ref{Theorem 4.1} by providing the Hamiltonian cycle $P_c{(2,6)} \cup \{(0,1,1,1,1,1) \rightarrow (0,0,0,0,0,0)\}$ (see below) for $C(2,6)$ in order to show the existence of a closed Euclidean knight's tour on the aforementioned set of $64$ vertices. Then, it will be easy to see that closed Euclidean knight's tours exist for any other $C(2,k)$ such that $k \geq 7$.

In detail, the polygonal chain $P_c{(2,6)} \coloneqq (0,0,0,0,0,0)\rightarrow
(1,1,1,1,1,0)\rightarrow
(0,0,0,0,1,1)\rightarrow
(1,1,1,1,0,1)\rightarrow
(0,0,0,1,1,0)\rightarrow
(1,1,1,0,0,0)\rightarrow
(0,0,0,1,0,1)\rightarrow
(1,1,1,0,1,1)\rightarrow
(0,0,1,1,0,0)\rightarrow
(1,1,0,0,1,0)\rightarrow
(0,0,1,1,1,1)\rightarrow
(1,1,0,0,0,1)\rightarrow
(0,0,1,0,1,0)\rightarrow
(1,1,0,1,0,0)\rightarrow
(0,0,1,0,0,1)\rightarrow
(1,1,0,1,1,1)\rightarrow
(0,1,1,0,0,0)\rightarrow
(1,0,0,1,1,0)\rightarrow
(0,1,1,0,1,1)\rightarrow
(1,0,0,1,0,1)\rightarrow
(0,1,1,1,1,0)\rightarrow
(1,0,0,0,0,0)\rightarrow
(0,1,1,1,0,1)\rightarrow
(1,0,0,0,1,1)\rightarrow
(0,1,0,1,0,0)\rightarrow
(1,0,1,0,1,0)\rightarrow
(0,1,0,1,1,1)\rightarrow
(1,0,1,0,0,1)\rightarrow
(0,1,0,0,1,0)\rightarrow
(1,0,1,1,0,0)\rightarrow
(0,1,0,0,0,1)\rightarrow
(1,0,1,1,1,1)\rightarrow
(1,1,0,0,0,0)\rightarrow
(0,0,1,1,1,0)\rightarrow
(1,1,0,0,1,1)\rightarrow
(0,0,1,1,0,1)\rightarrow
(1,1,0,1,1,0)\rightarrow
(0,0,1,0,0,0)\rightarrow
(1,1,0,1,0,1)\rightarrow
(0,0,1,0,1,1)\rightarrow
(1,1,1,1,0,0)\rightarrow
(0,0,0,0,1,0)\rightarrow
(1,1,1,1,1,1)\rightarrow
(0,0,0,0,0,1)\rightarrow
(1,1,1,0,1,0)\rightarrow
(0,0,0,1,0,0)\rightarrow
(1,1,1,0,0,1)\rightarrow
(0,0,0,1,1,1)\rightarrow
(1,0,1,0,0,0)\rightarrow
(0,1,0,1,1,0)\rightarrow
(1,0,1,0,1,1)\rightarrow
(0,1,0,1,0,1)\rightarrow
(1,0,1,1,1,0)\rightarrow
(0,1,0,0,0,0)\rightarrow
(1,0,1,1,0,1)\rightarrow
(0,1,0,0,1,1)\rightarrow
(1,0,0,1,0,0)\rightarrow
(0,1,1,0,1,0)\rightarrow
(1,0,0,1,1,1)\rightarrow
(0,1,1,0,0,1)\rightarrow
(1,0,0,0,1,0)\rightarrow
(0,1,1,1,0,0)\rightarrow
(1,0,0,0,0,1)\rightarrow
(0,1,1,1,1,1)$ represents a closed Euclidean knight's tour on $C(2,6)$, since the distance between its arrival and the starting vertex, ${\rm{V}}_{0}\equiv (0,0,0,0,0,0,0)$, is $d({{\rm{V}}}_{63}, {\rm{V}}_{0})=d((0,1,1,1,1,1),(0,0,0,0,0,0))=\sqrt{0^2+1^2+1^2+1^2+1^2+1^2}=\sqrt{5}$ (see Equation \ref{eq:1}).

That being so, we have constructed a closed Euclidean knight's tour on $C(2,6)$, the set of the $2^6$ corners of a $6$-cube.

Now, we note that any vertex of a $6$-face belonging to a unit $7$-cube is connected to some other vertices of the opposite $6$-face of the same $7$-cube by as many diagonals which are shorter than the $7$-cube diameter, including those characterized by a Euclidean length equal to the diameter of a unit $5$-cube (since $5<7$, trivially).

Thus, we can simply take the solution for the $k=6$ case and reproduce it also on the opposite $6$-face of the mentioned $7$-cube, and then we are free to mirror/rotate it so that the endpoints of both the covering paths of the two $6$-faces are connected by a pair of non-main diagonals of the $7$-cube with (Euclidean) length $\sqrt{5}$.

As an example, we can extend the $k=6$ solution $P_c{(2,6)}=(0,0,0,0,0,0) \rightarrow (1,1,1,1,1,0) \rightarrow \cdots \rightarrow (0,1,1,1,1,1)$ to $k=7$ as follows.

\begin{enumerate}
\item First of all, we move $P_c(2,6)$ from $(\mathbb{R}^6, d)$ to $(\mathbb{R}^7, d)$, and then we duplicate it as $S_1(2,7)=\{(0,0,0,0,0,0,0) \rightarrow (1,1,1,1,1,0,0) \rightarrow \cdots \rightarrow(0,1,1,1,1,1,0)\}$ and $S_2(2,7)=\{(0,0,0,0,0,0,1)\rightarrow (1,1,1,1,1,0,1) \rightarrow \cdots \rightarrow(0,1,1,1,1,1,1)\}$.
\item We apply $S_1(2,7)$ to the first $6$-face of $C(2,7)$ as it is (i.e., $\{(0,0,0,0,0,0,0) \rightarrow (1,1,1,1,1,0,0) \rightarrow \cdots \rightarrow(0,1,1,1,1,1,0)\}$), while we switch between $(0~\leftrightarrow~1)$ exactly $(5-1)$ more times out of $(7-1)$ Cartesian coordinates left (i.e., we modify $4$ other coordinates at our choice and we keep doing the switch $(0~\leftrightarrow~1)$ on the same, selected, coordinates of every node of $S_2(2,7)$, for the entire transformation of the aforementioned path) in order to place in a legit spot the arrival of the final path, since we can do this by simply reflecting/rotating and then reversing the whole polygonal chain $S_2(2,7)$ (e.g., $\{(0,1,1,1,1,0,1)\rightarrow (1,0,0,0,0,0,1) \rightarrow \cdots \rightarrow (0,0,0,0,0,1,1)\}$ indicates a valid geometric transformation of $S_2(2,7)$ that we can later reverse and finally apply on the proper $6$-face).
\item As a result, we get the Hamiltonian path for the opposite $6$-face of $C(2,7)$, $\hat{S_2}(2,7) \coloneqq \{(0,1,1,1,1,0,1)\leftarrow (1,0,0,0,0,0,1) \leftarrow \cdots \leftarrow (0,0,0,0,0,1,1)\}$, which is a polygonal chain whose starting point is at a distance of $$\sqrt{(0-0)^2+(0-1)^2+(0-1)^2+(0-1)^2+(0-1)^2+(1-1)^2+(1-0)^2}$$ from the ending point of $S_1(2,7)$ and whose arrival is, again, at a distance of $$\sqrt{(0-0)^2+(1-0)^2+(1-0)^2+(1-0)^2+(1-0)^2+(0-0)^2+(1-0)^2}$$ from the beginning of $S_1(2,7)$, letting our Euclidean knight jump onto $\hat{S_2}(2,7)$ at the end of $S_1(2,7)$ and vice versa, over and over.
\item Accordingly, we join the ending point of $S_1(2,7)$ and the starting point of $\hat{S_2}(2,7)$ to get the closed Euclidean knight's tour described by $S_1(2,7) \cup \{(0,1,1,1,1,1,0) \rightarrow (0,0,0,0,0,1,1)\} \cup \hat{S_2}(2,7)$, a Hamiltonian path $\{(0,0,0,0,0,0,0)\rightarrow \cdots \rightarrow (0,1,1,1,1,0,1)\}$ for $C(2,7)$ that, without loss of generality, we have constructed from a closed Euclidean knight's tour on $C(2,6)$ beginning from $(0,0,0,0,0,0) \in \mathbb{Z}^6$.
\end{enumerate}

Finally, we can repeat the same process to extend the $7$-cube solution to the $8$-cube, and so forth. We are allowed to do so since any vertex of a $k$-cube is also a corner, and thus the symmetry is preserved.

Therefore, there exist (many) closed Euclidean knight's tours on each $2 \times 2 \times \cdots \times 2$ chessboard with at least $2^6$ cells, whereas no Euclidean knight's tour is possible on $C(2,k)$ for $k < 6$, and the proof of Theorem~\ref{Theorem 4.1} is complete.
\end{proof}


\section{Conclusion} \label{sec:Conc}

The move pattern of the $2$D knight, the piece that also appears in the FIDE logo, is described by Article~3.6 of Reference \cite{1} as follows: ``\textit{The knight may move to one of the squares {\bf{\textit{nearest}}} to that on which it stands but not on the same rank, file or diagonal}''. Thus, we have defined the knight by assuming the standard Euclidean metric and, consequently, the resulting distance covered by any jump is $\sqrt{5}$.

Then, it seems legitimate to assume that a multidimensional knight can also be defined in a more inclusive way than the usual description of a piece that merely moves, from a vertex of $C(n,k)$ to another one of the same set, by adding or subtracting $2$ to one of the $k$ Cartesian coordinates of the starting vertex and, simultaneously, adding or subtracting $1$ to another of the remaining $k-1$ elements of the mentioned $k$-tuple. Accordingly, we have provided the alternative definition of the knight as a piece that is allowed to move from ${{\rm{V}}}_{m} \in C(n,k)$ to ${{\rm{V}}}_{m+1} \in C(n,k)$ if and only if the condition $d({{\rm{V}}}_{m}, {{\rm{V}}}_{m+1})=d({{\rm{V}}}_{m+1}, {{\rm{V}}}_{m})=\sqrt{5}$ is satisfied (where $d({{\rm{A}}}, {{\rm{B}}})$ indicates the Euclidean distance between the point ${{\rm{A}}}$ and the point ${{\rm{B}}}$, as usual).

Consequently, the present paper has shown that such a Euclidean knight can produce a knight's tour also on $k$-dimensional grids of the form $\{\{0,1,2\} \times \{0,1,2\}\times \cdots \times \{0,1,2\}\}$, for some $k \geq 5$. In particular, by Corollary~\ref{Corollary 2.3}, every Euclidean knight's tour for any $3 \times 3 \times \cdots \times 3$ chessboard is necessarily an open tour.

Now, if this is not enough to fully accept the $\sqrt{5}$ knight metric, since the knight may not be allowed to move along one of the main diagonals of a $5$-cube by unimaginatively extending beyond the $k=2$ case the semantic meaning of Article~3.6 of \cite{1}, then Theorem~\ref{Theorem 4.1} shows that it is possible to construct closed Euclidean knight's tours on any ($k$-dimensional) $2 \times 2 \times \cdots \times 2$ chessboard, as long as $k \geq 6$, avoiding by construction to move along any main diagonal of the given $k$-cube. Furthermore, we could invoke the same argument to suggest that, for any $k > 4$, the central vertex of a $3 \times 3 \times \cdots \times 3$ $k$-cube is (Euclidean) knight-connected to any other element of the set $C(3,k)$.

As a result, Theorem~3 of \cite{17} can no longer be invoked on any metric space $C(2,k) : k > 5$ where the distance between two vertices ${{\rm{A}}} \in C(n,k)$ and ${{\rm{B}}} \in C(n,k)$ is given by the minimum number of jumps \cite{21} of (Euclidean) length $\sqrt{5}$ that the described Euclidean $k$-knight requires in order to go from ${{\rm{A}}}$ to ${{\rm{B}}}$ (and vice versa).

Lastly, a related open problem is to prove the existence of a closed (possibly Euclidean) knight's tour also for every $C(3,k)-\{(1,1,\ldots,1)\}$ such that $k$ is even (in Section~\ref{sec:3}, we have verified the correctness of this conjecture for the cases $k=2$ and $k=4$, but making such inferences in low dimensions can be notoriously misleading).

\section*{Acknowledgements} 

The author sincerely thanks Aldo Roberto Pessolano for his invaluable effort, greatly helping him to prove Theorem~4.1 by performing a computational analysis that has returned, in just a few seconds, independent closed Euclidean knight's tours for $2 \times 2 \times \cdots \times 2$ chessboards of $2^7$, $2^8$, $2^9$, $2^{10}$, $2^{11}$, and $2^{12}$ cells (we have not included them in the present paper to save space). Additionally, the author is grateful to the Mathematics Stack Exchange users Steven Stadnicki and Ben Grossmann for having suggested the existence of closed Euclidean knight's tours also for the $2 \times 2 \times 2 \times 2 \times 2 \times 2$ chessboard.

\makeatletter
\renewcommand{\@biblabel}[1]{[#1]\hfill}
\makeatother

\end{document}